\newtheorem{theorem}{Theorem}
\newtheorem{claim}{Claim}
\newtheorem{definition}{Definition}
\def\qed{\ifhmode\unskip\nobreak\fi\quad\ifmmode\Box\else$\Box$\fi}
\title{Saturated $2$-plane drawings with few edges}
\author{J\'anos Bar\'at\thanks{Supported by NKFIH grant K-131529 and ERC Advanced Grant “GeoScape” No. 882971.}\\
\small  Department of Mathematics, University of Pannonia and \\
\small Alfr\'ed R\'enyi Institute of Mathematics\\
\small \texttt{barat@mik.uni-pannon.hu}\\
and\\
G\'eza T\'oth\thanks{Supported by NKFIH grant K-131529 and ERC Advanced Grant “GeoScape” No. 882971.}\\
\small Alfr\'ed R\'enyi Institute of Mathematics\\
\small \texttt{toth.geza@renyi.mta.hu}}
\begin{document}

\maketitle
\begin{abstract}
A drawing of a graph is $k$-plane if every edge contains at most $k$
crossings. A $k$-plane drawing is saturated if we cannot add any edge
so that the drawing remains $k$-plane.
It is well-known that saturated $0$-plane drawings, that is,
maximal plane graphs, of $n$ vertices have exactly
$3n{-}6$ edges. For $k>0$, the number of edges of
saturated $n$-vertex $k$-plane graphs can take many different values.
In this note, we establish some bounds on the minimum number of edges
of saturated $2$-plane graphs under various conditions.
\end{abstract}


\section{Preliminaries}

In a  drawing of a graph in the plane,
vertices are represented by points, edges are represented by curves
connecting the points, which correspond to adjacent vertices. 
The points (curves) are also
called vertices (edges).
We assume that an edge does not go through any vertex,
and three edges do not cross at the same point.
A graph together with its drawing is a {\em topological graph}. 
A drawing or a topological graph is {\em simple} if
any two edges have at most one point in common, that is either a common endpoint or a crossing.
In particular, there is no self-crossing.
In this paper, we assume the underlying graph has
neither loops nor multiple edges. 

For any $k\ge 0$, a topological graph is {\it $k$-plane} if each edge contains at most $k$ crossings.
A  graph $G$ is  {\it $k$-planar}
if it has a {\it $k$-plane} 
drawing in the plane.

There are several versions of these concepts, see e.g.
\cite{kozos}. The most studied one is when we consider only simple drawings.
A  graph $G$ is  {\it simple $k$-planar}
if it has a {\it simple $k$-plane} 
drawing in the plane.

A simple $k$-plane drawing is {\em saturated} if no edge can be added
so that the obtained drawing is also simple $k$-plane.
The $0$-planar graphs are the well-known planar graphs.
A plane graph of $n$ vertices has at most $3n-6$ edges. 
If it has exactly  $3n-6$ edges, then it is a triangulation of the plane.
If it has fewer edges, then we can add some edges so that it
becomes a triangulation with $3n-6$ edges.
That is, saturated plane graphs have $3n-6$ edges.

Pach and Tóth \cite{pt} proved the maximum number of edges of an $n$-vertex 
(simple) $1$-planar graph is $4n-8$.
Brandenburg et al. \cite{brandenburg} noticed that
saturated simple $1$-plane graphs can have much fewer edges, namely
$\frac{45}{17}n + O(1)\approx 2.647n$.
Bar\'at and T\'oth \cite{bt} proved that  a
saturated simple $1$-plane graph has at least
$\frac{20n}{9}-O(1)\approx 2.22n$ edges. 

For any $k, n$, let $s_k(n)$ be the minimum number of edges of a saturated $n$-vertex simple $k$-plane drawing.
With these notations,
$\frac{45n}{17}+O(1)\ge s_1(n)\ge \frac{20}{9}n-O(1)$.
For $k>1$, the best bounds known for $s_k(n)$ are shown by
Auer et al \cite{auer} and by Klute and Parada \cite{kp}.
Interestingly for $k\ge 5$ the bounds are very close.

In this note, we concentrate on $2$-planar graphs on $n$ vertices. Pach and Tóth \cite{pt} showed
the maximum number of edges of a (simple) $2$-planar graph is $5n-10$.
Auer et al \cite{auer} and  Klute and Parada \cite{kp} proved that 
$\frac{4n}{3}+O(1)\ge s_2(n)\ge \frac{n}{2}-O(1)$.
We improve the lower bound.

\begin{theorem}\label{n-1}
For any $n>0$, $s_2(n)\ge n-1$.
\end{theorem}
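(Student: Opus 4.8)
The plan is to prove the statement by showing that a saturated simple $2$-plane drawing is connected: a connected graph on $n$ vertices has at least $n-1$ edges, so connectivity is exactly equivalent to the bound. Thus I would argue by contradiction. Suppose $D$ is a saturated simple $2$-plane drawing that is disconnected, and let $A$ and $B$ be two of its connected components. The goal is to exhibit a curve joining a vertex of $A$ to a vertex of $B$ that crosses each edge at most once and at most two edges in total; such a curve is an addable edge (the two endpoints lie in different components, so no parallel edge is created), contradicting saturation.

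The first step is a topological reduction: I would pass to the \emph{planarization} $P$ of $D$, in which every crossing is turned into a degree-$4$ vertex, so that $P$ is a plane graph whose faces are bounded by \emph{edge-pieces} (maximal arcs of edges between consecutive crossings or endpoints). I claim some face of $P$ is incident to edge-pieces or vertices of two distinct components. Indeed, if every face were incident to a single component, then a generic curve between a point of $A$ and a point of $B$ would pass through a sequence of faces in which consecutive faces share an edge-piece, and hence the same component label; this would force $A$ and $B$ to coincide. Isolated vertices are handled by noting that such a vertex lying in a face already makes that face meet two components. So I may fix a face $f$ incident to two distinct components, and rename them $A$ and $B$.

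The crux is the local construction inside $f$. I would show that from the interior of $f$ one can reach a vertex of $A$ crossing at most one edge, and likewise a vertex of $B$; concatenating the two reaches through the free interior of $f$ yields the desired edge with at most two crossings. To reach a vertex of $A$: if a vertex of $A$ already lies on $\partial f$, connect to it inside $f$ with no crossing; otherwise an edge-piece of some $e\in A$ lies on $\partial f$, both of whose ends are crossings, and here $2$-planarity is essential, since $e$ carries at most two crossings, so beyond at least one of them the arc of $e$ to an endpoint is crossing-free; rounding that one crossing costs a single crossing and leads to a vertex of $A$. The analogous argument applies to $B$. A particularly clean sub-case occurs when the component label changes along $\partial f$ at a crossing $c=e\cap e'$ with $e\in A$ and $e'\in B$: then each of $e,e'$ has a crossing-free arc from $c$ to an endpoint, and because the four rays at a transversal crossing alternate between $e$ and $e'$, a free $e$-ray and a free $e'$-ray bound a common quadrant; hugging these arcs and rounding $c$ inside that quadrant produces an edge with \emph{zero} crossings.

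I expect the main obstacle to be the careful verification that the constructed curve is genuinely simple and $2$-plane, rather than the global structure. Concretely, I must ensure that the two one-crossing reaches do not cross the same edge twice (which would violate simplicity) and that hugging the crossing-free arcs and rounding a crossing inside a single quadrant introduces no further crossings; this is precisely the place where $2$-planarity is used, since a $3$-plane edge could be crossed on both sides of the rounded crossing and destroy the crossing-free detour. I would also need to treat with some care the degenerate configurations, such as a component nested as an inner boundary walk of $f$ or an isolated vertex, but in each of these the same "reach a vertex with at most one crossing" principle applies and keeps the total crossing count at most two.
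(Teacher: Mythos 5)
Your approach has a fatal gap at its very foundation: the claim that a saturated simple $2$-plane drawing must be connected is false, and the reason it is false is exactly the point your local construction overlooks. When you ``round a crossing'' of an edge $e\in A$ at the point where $e$ meets $e'$, your new curve crosses $e'$ --- and this crossing counts against $e'$ as well. Saturation only requires that the \emph{whole} resulting drawing stay $2$-plane, so you may not cross any edge that already carries two crossings. Now consider a cell bounded entirely by \emph{middle segments}, i.e.\ by pieces of edges lying between their two crossings (the paper calls these \emph{special cells}; e.g.\ take three pairwise crossing independent edges and the central triangular cell they enclose, and complete the drawing to a saturated one --- no later edge can ever enter that cell, since entering it means giving some bounding edge a third crossing). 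An isolated vertex placed in such a cell cannot be joined to anything: every curve leaving the cell crosses a bounding edge that already has two crossings. Hence disconnected saturated simple $2$-plane drawings exist, your planarization-face argument cannot produce an addable edge in such a face, and ``saturated $\Rightarrow$ connected'' is not a viable route to $e\ge n-1$.

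This is precisely why the paper's proof is not a connectivity argument but a discharging argument. The paper first shows every flag (pendant edge) is crossing-free and can be deleted while preserving saturation, then assumes each special cell contains an isolated vertex, assigns weight $1$ to every edge, and redistributes: a non-special edge gives $1/2$ to each endpoint, while a special edge gives $1/2$ or $1/3$ to its endpoints (depending on whether their degree is $2$ or at least $3$) and the surplus to the isolated vertex in the special cell it bounds. The heart of the proof --- and the part your proposal has no substitute for --- is a case analysis showing that among the endpoints of the edges bounding a special cell, at least six (with multiplicity) have degree at least $3$, so the isolated vertex inside collects charge at least $6\cdot\frac16=1$. In other words, the paper concedes that isolated vertices in special cells are unavoidable and instead shows that the structure surrounding a special cell forces enough edges nearby to pay for them. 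If you want to salvage your write-up, the part worth keeping is the observation that in a non-special cell one can always reach a vertex with at most one crossing without overloading any crossed edge; but you must then confront special cells head-on, which is where all the real work in this theorem lies.
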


A drawing is {\em $l$-simple} if
any two edges have at most $l$  points in common.
By definition a simple drawing is the same as a $1$-simple drawing.
Let $s_k^l(n)$ be the minimum number of edges of a saturated $n$-vertex $l$-simple $k$-plane drawing.
In \cite{kp} it is shown that
$\frac{4n}{5}+O(1)\ge s_2^2(n)\ge \frac{n}{2}-O(1)$ and
$\frac{2n}{3}+O(1)\ge s_2^3(n)\ge \frac{n}{2}-O(1)$.
We make the following improvements:

\begin{theorem}\label{2-3-simple}
(i)
$s_2^2(3)=3$, and 
$\lfloor3n/4\rfloor \ge s_2^2(n)\ge \lfloor2n/3\rfloor$ for $n\neq 3$,\\
  (ii) $s_2^3(3)=3$, and 
  $s_2^3(n)=\lfloor2n/3\rfloor$ for $n\neq 3$.
\end{theorem}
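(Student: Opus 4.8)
The plan is to treat the lower and upper bounds separately, and to handle the $2$-simple and $3$-simple cases with the same two ideas, differing only in the building block. For the lower bounds $s_2^2(n),s_2^3(n)\ge\lfloor2n/3\rfloor$ I would use a discharging argument on vertex degrees, assigning to each vertex $v$ the charge $3\deg(v)-4$, so that the total charge is $6m-4n$ and the desired bound $m\ge\lfloor2n/3\rfloor$ amounts to showing that this total is nonnegative up to the additive slack hidden in the floor. The only vertices of negative charge are those of degree $0$ and $1$, so the heart of the matter is to show that, because the drawing is saturated, each such low-degree vertex is ``paid for'' by nearby vertices of degree at least $2$. The topological input is an observation of the following type: if $v$ is isolated, then the face of the arrangement of all edges containing $v$ can have no other vertex on its boundary or in its interior, since otherwise $v$ could be joined to that vertex by a crossing-free curve; more generally, any vertex reachable from $v$ across at most two edges that are \emph{not already full} (carry fewer than two crossings) would yield an addable edge. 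Iterating, an isolated or degree-$1$ vertex must be walled in by full edges, and those crossings force endpoints of positive charge to sit nearby. Since a new edge may cross at most two existing edges in either the $2$-simple or the $3$-simple setting, the same argument applies to both. I expect the main obstacle here to be organizing this local analysis into a clean, globally valid charging rule, in particular the case analysis for how crossings can trap a low-degree vertex; the floors in the statement signal that a bounded number of trapped ``leftover'' vertices must be allowed and absorbed into the slack.

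For the upper bounds I would exhibit explicit saturated drawings assembled from small gadgets whose edges are all \emph{full}, i.e.\ already carry two crossings, so that no new edge may cross them and every addable edge must avoid all existing edges entirely. In the $3$-simple case the gadget is a path $a\,b\,c$ in which the incident edges $ab$ and $bc$ cross each other twice; together with their shared endpoint $b$ this uses all three permitted common points and makes both edges full, giving $2$ edges on $3$ vertices. In the $2$-simple case incident edges may share only two points, hence cross at most once, so I would instead use a path $a\,b\,c\,d$ whose three edges pairwise cross exactly once; each edge then carries two crossings and is full, giving $3$ edges on $4$ vertices. In either gadget the doubly-crossed edges enclose bounded cells, and any surplus vertices (those beyond a multiple of $3$, resp.\ $4$) can be placed inside such a cell so as to become trapped, since an edge leaving the cell would immediately meet a full boundary edge. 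This is what lets the construction match the floor for every residue of $n$, and it also explains why $s_2^2(3)=s_2^3(3)=3$ is stated apart: on three vertices any drawing with fewer than three edges can be completed, so the triangle $K_3$ is forced.

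The remaining and most delicate point is to arrange the gadgets so that the \emph{whole} drawing is saturated, not merely each gadget in isolation. A single gadget alone is not saturated, because its two ends $a$ and $c$ (and vertices of distinct gadgets) can be joined through the unbounded face without crossing anything. I would therefore nest the gadgets, or arrange them cyclically around a common region, so that every pair of vertices lying in distinct gadgets, as well as the two ends of each gadget, are separated by full edges, and then check that no candidate edge can be routed across at most two non-full edges. For the $3$-simple case this verification, together with the trapping of leftover vertices, should yield a saturated drawing with exactly $\lfloor2n/3\rfloor$ edges, matching the lower bound and giving the equality in~(ii); for the $2$-simple case the same scheme with the four-vertex gadget yields the upper bound $\lfloor3n/4\rfloor$ in~(i), while the lower bound $\lfloor2n/3\rfloor$ comes from the discharging argument above. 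The main obstacle throughout is this global saturation check: confirming that each of the finitely many ways of inserting an edge is blocked by a full edge rather than merely by the shape of one gadget.
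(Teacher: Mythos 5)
Your outline matches the paper's (explicit gadget constructions for the upper bounds, a charging argument for the lower bound $\lfloor 2n/3\rfloor$), and your $3$-simple gadget is precisely the paper's $2$-propeller. But both halves contain genuine gaps.

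The $2$-simple gadget does not exist: a path $abcd$ whose three edges pairwise cross once can never be drawn as a saturated $2$-plane, $2$-simple graph with its vertices enclosed. Indeed, suppose the unbounded cell is bounded by middle segments only, and consider any crossing $x$ of two edges $e,f$. The four branches alternate $e,f,e,f$ around $x$, so each end segment is opposite to its own edge's middle segment; hence the two end segments at $x$ are consecutive in the rotation at $x$, and the cell in the sector between them has both of their endpoints on its boundary (end segments carry no crossings). By saturation those two endpoints must coincide or be adjacent. At the crossing of $ab$ with $cd$ the possible endpoint pairs are $(a,c),(a,d),(b,c),(b,d)$, of which only $(b,c)$ is an edge, so the end segments there run to $b$ and $c$; this forces the ends at the other two crossings to run to the pairs $(a,b)$ and $(c,d)$, and then the cell hugging the inner side of the middle segment of $bc$ has both $a$ and $d$ on its boundary, so the edge $ad$ can be added. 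The paper's gadget is instead the star $K_{1,3}$ (a $3$-propeller): since every two edges share the center $v$, each crossing can have one end segment returning to $v$, and these three segments cut the interior into three pockets, one leaf per pocket, so every co-cellular pair is adjacent. The star-versus-path difference is exactly what makes the construction work. Your fallback of nesting or cyclic arrangement cannot repair a defective gadget either: if two vertices of one gadget share a cell, a separating wall of twice-crossed edges supplied by other gadgets would have to be crossed by an edge of the (connected) gadget itself, which is impossible since its edges already carry two crossings. Saturation must hold gadget by gadget; once it does, the paper simply places the propellers side by side. Note also that at most one leftover vertex can be trapped in total (two isolated vertices in one cell could be joined), which is why the residues $n\equiv 2,3 \pmod 4$ are handled with a $K_2$ and a $4$-propeller rather than with several trapped vertices.

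For the lower bound, your charge $3d(v)-4$ is the same normalization as the paper's ($3/2$ per edge), but you explicitly defer the core difficulty, and that difficulty cannot be resolved by a purely local charging rule. The obstruction is the extremal configuration itself: a special cell all of whose bounding edges are flags, i.e.\ an $m$-propeller with an isolated vertex inside. There the only vertices present are the $m$ leaves (charge $-1$ each), the isolated vertex ($-4$), and the single center ($3m-4$), for a total of $2m-8$, which is negative for $m=3$; there are no other nearby vertices from which charge could be drawn. The paper escapes this not by charging but by induction on $n$: it deletes empty flags, reduces to essentially connected drawings (Claim~\ref{ess}), shows a flag's neighbour has degree at least $3$ unless a $2$-propeller occurs (Claim~\ref{deg3}), and, in the case analysis of special cells, shows that if all bounding edges are flags then the graph spans an $m$-propeller, so either the propeller plus its isolated vertex is the whole graph (a base case, where the floor absorbs the deficit) or its center is a cut vertex and the induction hypothesis applies to the rest. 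This inductive scaffolding is the actual content of the proof and is absent from your sketch. Finally, a small internal inconsistency: your justification of the exceptional value at $n=3$ (``with fewer than three edges the drawing can always be completed'') is valid only in the $2$-simple setting; in the $3$-simple setting it is contradicted by your own three-vertex gadget, which is a saturated drawing with two edges.
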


The saturation problem for $k$-planar graphs has many different settings,
we can allow self-crossings, parallel edges, or we can consider non-extendable {\em abstract} graphs.
See \cite{kozos} for many recent results and a survey.

\section{Proofs}


\begin{definition}
  Let $G$ be a topological graph and $u$ a vertex of degree $1$.
For short, $u$ is called a {\em leaf} of $G$. 
Let $v$ be the only neighbor of $u$.
The pair $(u, uv)$ is called a {\em flag}.
If there is no crossing on $uv$, then $(u, uv)$ is an {\em empty flag}.
\end{definition}

\begin{definition}
Let $G$ be an $l$-simple $2$-plane topological graph.
If an edge contains two crossings, then its piece between the two crossings is a {\em middle segment}.
The edges of $G$ divide the plane into cells.
A cell $C$ is {\em special} if it is bounded only by middle segments and isolated vertices.
Equivalently, $C$ is {\em special},
if there is no vertex on its boundary, apart from isolated vertices. 
An edge that bounds a special cell is also {\em special}.
\end{definition}

Let $G$ be a saturated $l$-simple $2$-plane topological graph, where $1\le l\le 3$.  
Suppose a cell $C$ contains an isolated vertex $v$.
Since $G$ is saturated, $C$ must be a special cell and 
there is no other isolated vertex in
$C$.
Now suppose $C$ is  an empty special cell. 
Each boundary edge contains two crossings. 
Therefore, if we put an isolated vertex in $C$, then the topological graph remains saturated. 
So if we want to prove a lower bound on the number of edges, 
we can assume without loss of generality that each special cell contains an isolated vertex.


\bigskip

\begin{claim} \label{only1}
 A special edge can bound at most one special cell.
\end{claim}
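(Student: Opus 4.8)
The plan is to localize the problem to the middle segment of the special edge and then analyze a single crossing on it. First I would record the reduction that makes the claim manageable. Let $e$ be a special edge. By definition $e$ bounds a special cell, and a special cell is bounded only by middle segments; hence the part of $e$ lying on the boundary of any special cell it bounds must itself be a middle segment. Since $e$ is $2$-plane it has at most two crossings, so it has a unique middle segment $m$, and no crossing lies in the interior of $m$. Consequently exactly two cells are incident to $m$, one on each side, and every special cell bounded by $e$ must be one of these two. So it suffices to prove that the two cells flanking $m$ cannot both be special.

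Next I would argue by contradiction, supposing that both sides of $m$ are special cells, say $C_1$ and $C_2$ (I would allow them to be equal, which does not affect the argument). Let $x$ be one of the two crossings bounding $m$, where $e$ crosses another edge $f$. At $x$ the two edges cross transversally, so the four arcs emanating from $x$ alternate between $e$ and $f$ in cyclic order; the two arcs of $e$ are $m$ and an end segment of $e$, and these are opposite in this cyclic order. Therefore the two cyclic neighbors of $m$ are precisely the two arcs of $f$ at $x$. The two wedges flanking $m$ lie locally in $C_1$ and $C_2$ respectively, and each such wedge is bounded by $m$ together with one arc of $f$.

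The crux is then the following incidence observation. Because $C_1$ and $C_2$ are special, their boundaries consist only of middle segments, so \emph{both} arcs of $f$ incident to $x$ would have to be middle segments. But $f$ is $2$-plane, hence has at most one middle segment, and if it exists that segment has $x$ as an endpoint and emanates from $x$ in a single direction; the opposite arc of $f$ at $x$ is an end segment (or $f$ has no middle segment at all). Thus the two arcs of $f$ at $x$ cannot both be middle segments, and we reach a contradiction. This forces at most one of the two cells flanking $m$ to be special, which proves the claim. I would note that this argument is uniform in $l \in \{1,2,3\}$ and is insensitive to whether $e$ and $f$ cross once or twice.

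I expect the main obstacle to be the local bookkeeping rather than any deep idea: one must argue cleanly that each wedge adjacent to $m$ at $x$ is bounded exactly by $m$ and one arc of $f$, and that the requirement ``bounded only by middle segments'' genuinely forces that $f$-arc to be a middle segment and not an end segment. Once the cyclic order of the four arcs at $x$ is pinned down and the uniqueness of $f$'s middle segment is invoked, the contradiction is immediate.
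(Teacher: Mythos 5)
Your proof is correct and takes essentially the same approach as the paper: both arguments localize at a crossing endpoint of the special edge's middle segment and exploit the fact that the crossing edge $f$, being $2$-plane, has at most one middle segment, so one of its two arcs at that crossing runs to a vertex rather than to another crossing. The only cosmetic difference is that the paper uses the ``no non-isolated vertex on the boundary'' characterization of special cells (exhibiting an endpoint of $f$ on the boundary of one flanking cell), while you use the equivalent ``bounded only by middle segments'' characterization; by the paper's own definition these are interchangeable.
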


\begin{proof}
Suppose $uv$ is a special edge and let $pq$ be its middle segment.
If $uv$ bounds more than one special cell, then there is a special cell on both sides of $pq$, $C_1$ and $C_2$ say.
Let $p$ be a crossing of the edges $uv$ and $xy$.
There is no crossing on $xy$ between $p$ and one of the
endpoints, $x$ say. 
Therefore, one of the cells $C_1$ and $C_2$ has $x$ on its boundary, a
contradiction.
\end{proof}

\begin{proof}[Proof of {Theorem \ref{n-1}}]
Suppose $G$ is a saturated simple 2-plane
topological graph of $n$ vertices and $e$ edges. We assume that  each special cell contains an isolated vertex.

\begin{claim} \label{emptyflag}
 All flags are empty in $G$.
\end{claim}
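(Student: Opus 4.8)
The plan is to argue by contradiction. Suppose some flag $(u,uv)$ of $G$ is \emph{not} empty, so that the edge $uv$ carries at least one crossing. Travelling along $uv$ starting from the leaf $u$, let $p$ be the first crossing encountered; then the initial piece $up$ of $uv$ is crossing-free. Let $xy$ be the edge that crosses $uv$ at $p$. Because the drawing is simple, adjacent edges do not cross, so $v\notin\{x,y\}$ and $u\notin\{x,y\}$; in particular the leaf $u$ is not adjacent to either endpoint of $xy$.

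The key step is a counting observation on $xy$. Since the drawing is $2$-plane, $xy$ has at most two crossings, one of which is $p$. Hence at most one further crossing lies on $xy$, so at least one of the two arcs into which $p$ splits $xy$ --- the arc $px$ or the arc $py$ --- is entirely crossing-free. Relabelling if necessary, assume the arc $px$ is crossing-free. Note that around $p$ the two edges alternate, so the ray $pu$ is cyclically adjacent to both $px$ and $py$; thus in either case there is an angular wedge at $p$ bounded by $pu$ and $px$.

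I would then construct a new edge joining $u$ to $x$, routed in a thin strip that hugs the crossing-free piece $up$ and then the crossing-free arc $px$, rounding the corner at $p$ inside the wedge bounded by the rays $pu$ and $px$. Staying strictly inside this wedge near $p$, the new curve avoids both $uv$ and $xy$; and since $up$ and $px$ are crossing-free, a sufficiently thin strip meets no other edge and no vertex. Hence the new edge $ux$ crosses nothing at all. It meets $uv$ only at $u$ and meets $xy$ only at $x$, so the drawing stays simple; it adds no crossing to any edge, so it stays $2$-plane; and since $u$ is a leaf whose unique neighbour $v$ differs from $x$, the pair $ux$ is genuinely a new edge. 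This contradicts the saturation of $G$, so $uv$ cannot carry a crossing, i.e.\ every flag is empty.

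The one place the argument could slip, and which I expect to be the crux, is the geometric claim that the detour around $p$ is realizable with no crossing: one must check that hugging $up$ and then $px$ within the single wedge $(pu,px)$ keeps the new curve off both strands through $p$, and that thinness of the strip rules out interference from edges that merely pass near $up$ or $px$ without crossing them. Everything else --- the two-crossing budget on $xy$ forcing a crossing-free arc, and the use of simplicity to guarantee $v\neq x,y$ so that $ux$ is admissible and genuinely new --- is routine.
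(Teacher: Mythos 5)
Your proposal is correct and follows essentially the same argument as the paper: take the crossing $p$ on $uv$ nearest to the leaf $u$, use the $2$-plane property to find a crossing-free arc $px$ of the crossing edge $xy$, and route a new edge from $u$ to $x$ along $up$ and $px$, contradicting saturation since $x\neq v$ while $d(u)=1$. Your extra care about the wedge at $p$ and the thin strip only spells out the routing that the paper leaves implicit.
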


\begin{proof}
Let $(u, uv)$ be a flag. 
Suppose to the contrary  there is at least one crossing on $uv$.
Let $p$ be the crossing on $uv$ closest to $u$, with edge $xy$. Since it is a 2-plane drawing, there is no crossing on $xy$ between $p$ and one of the endpoints, $x$ say.
In this case, we can connect $u$ to $x$ along $up$ and $px$. 
Since the drawing was saturated, $u$ and $x$ are adjacent in $G$, and $x\neq v$, that contradicts to $d(u)=1$.
\end{proof}

Remove all empty flags from $G$. 
Observe the resulting topological graph $G'$ is also saturated. 
If we can add an edge to $G'$, then we could have added the same edge to $G$.

Suppose to the contrary that $G'$ contains a flag $(v, vw)$.
Since $G'$ is saturated, the flag is empty by Claim~\ref{emptyflag}.
In $G$, vertex $v$ had degree at least $2$, so $v$ had some other neighbors, 
$u_1, \ldots, u_m$ say, in clockwise order. The flags $(u_i,u_iv)$ were all empty. 
However, $u_1$ can be connected to $w$, which is a contradiction. 
Therefore, there are no flags in $G'$.
On the other hand, the graph $G'$ may contain isolated vertices.
Let $n'$ and $e'$ denote the number of vertices and edges of $G'$.
Since $n-n'=e-e'$, it suffices to show that $e'\ge n'-1$. 
If there are no isolated vertices in $G'$,
then $e'\ge n'$ is immediate. 

We assign weight $1$ to each edge. 
If $G'$ has no edge, then it has one vertex and we are done.
We discharge the weights to the vertices so that each
vertex gets weight at least $1$.
If $uv$ is not a special edge, then 
it gives weight $1/2$ to both endpoints $u$ and $v$.
Suppose now that $uv$ is a special edge. 
It bounds the special cell $C$ containing the isolated vertex $x$. 
If $d(u)=2$, then $uv$ gives weight $1/2$ to $u$, if $d(u)\ge 3$, then 
it gives weight $1/3$ to $u$. We similarly distribute the weight to vertex $v$.
We give the remaining weight of $uv$ to $x$.

We show that each vertex gets weight at least 1. 
This holds immediately for all vertices of positive degree. 
We have to show the statement only for isolated vertices.
Let $x$ be an isolated vertex in a special cell $C$ bounded by
$e_1, e_2, \ldots, e_m$ in clockwise direction.
Let $e_i=u_iv_i$ such that the oriented curve $\overrightarrow{u_iv_i}$ has $C$ on its right. 
See Figure~\ref{specialdisch} for $m=5$. 
Let $p_i$ be the crossing of $e_i$ and $e_{i+1}$. Indices are understood modulo $m$.
In general, it may happen that some of the points in
$\{\ u_i, v_i\ | \ i=1, \ldots, m\ \}$ coincide.
For each vertex $u_i$ or $v_i$ of degree at least $3$, the corresponding boundary edge of $C$ has a remainder charge at least $1/6$.
We have to prove that (with multiplicity) at least $6$ of the vertices 
$u_i$, $v_i$ have degree at least $3$.
Consider vertex $v_i$.

\smallskip

Case 1: $v_i=u_{i+2}$.
The vertex $v_i=u_{i+2}$ can be connected to $u_{i+1}$ along the segments $v_ip_i$ and $p_iu_{i+1}$,
that are crossing-free segments of the corresponding edges.
Similarly, $v_i=u_{i+2}$ can be connected to $v_{i+1}$ along $v_ip_{i+1}$ and $p_{i+1}v_{i+1}$.
Since the drawing was simple and saturated, 
$u_i$, $u_{i+1}$, $v_{i+1}$, $v_{i+2}$ are all different and they are already connected to $v_i=u_{i+2}$,
so it has degree at least $4$.

\smallskip

\begin{figure}
\begin{center}
\scalebox{0.25}{\includegraphics{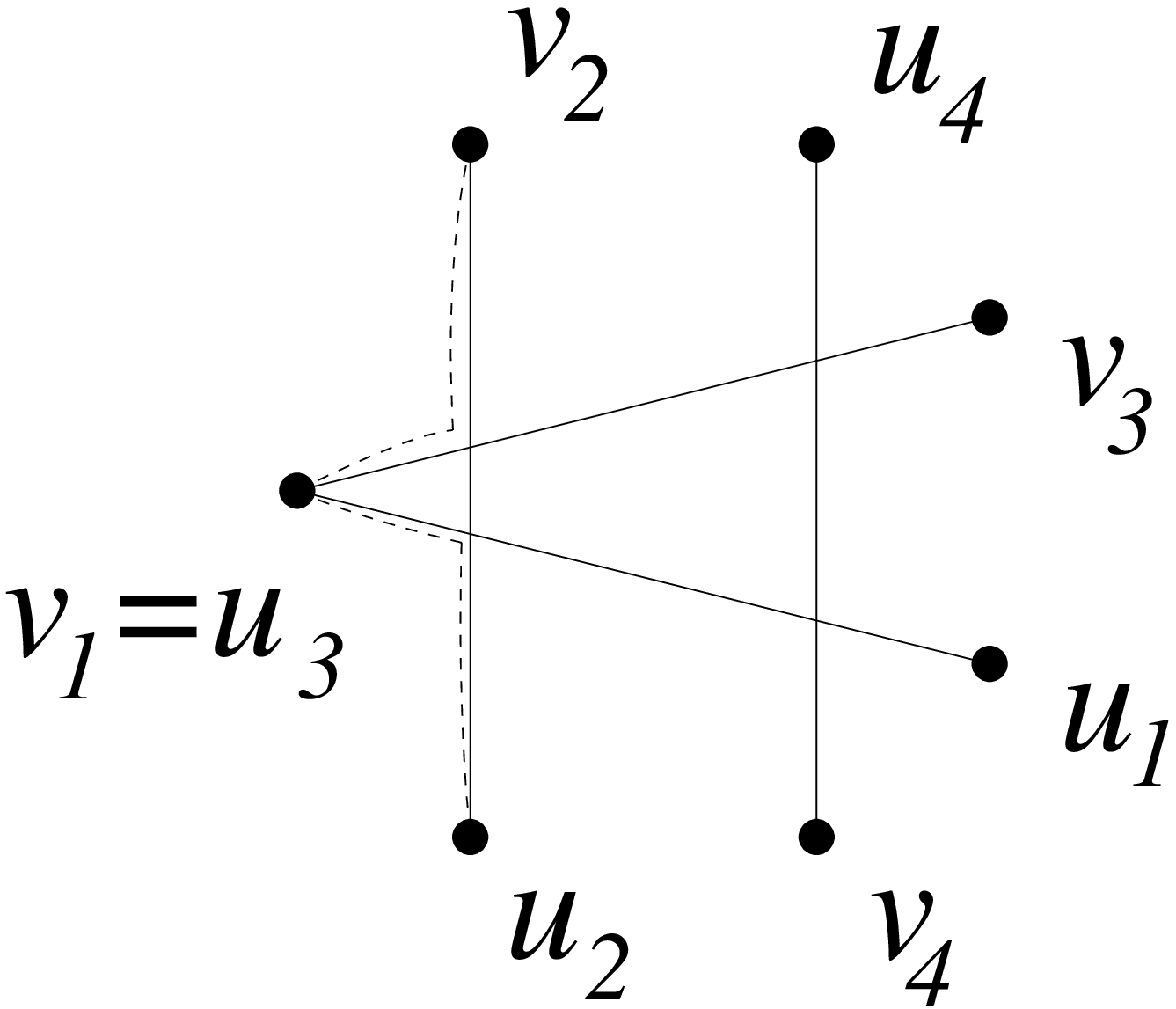}\hspace{3cm} \includegraphics{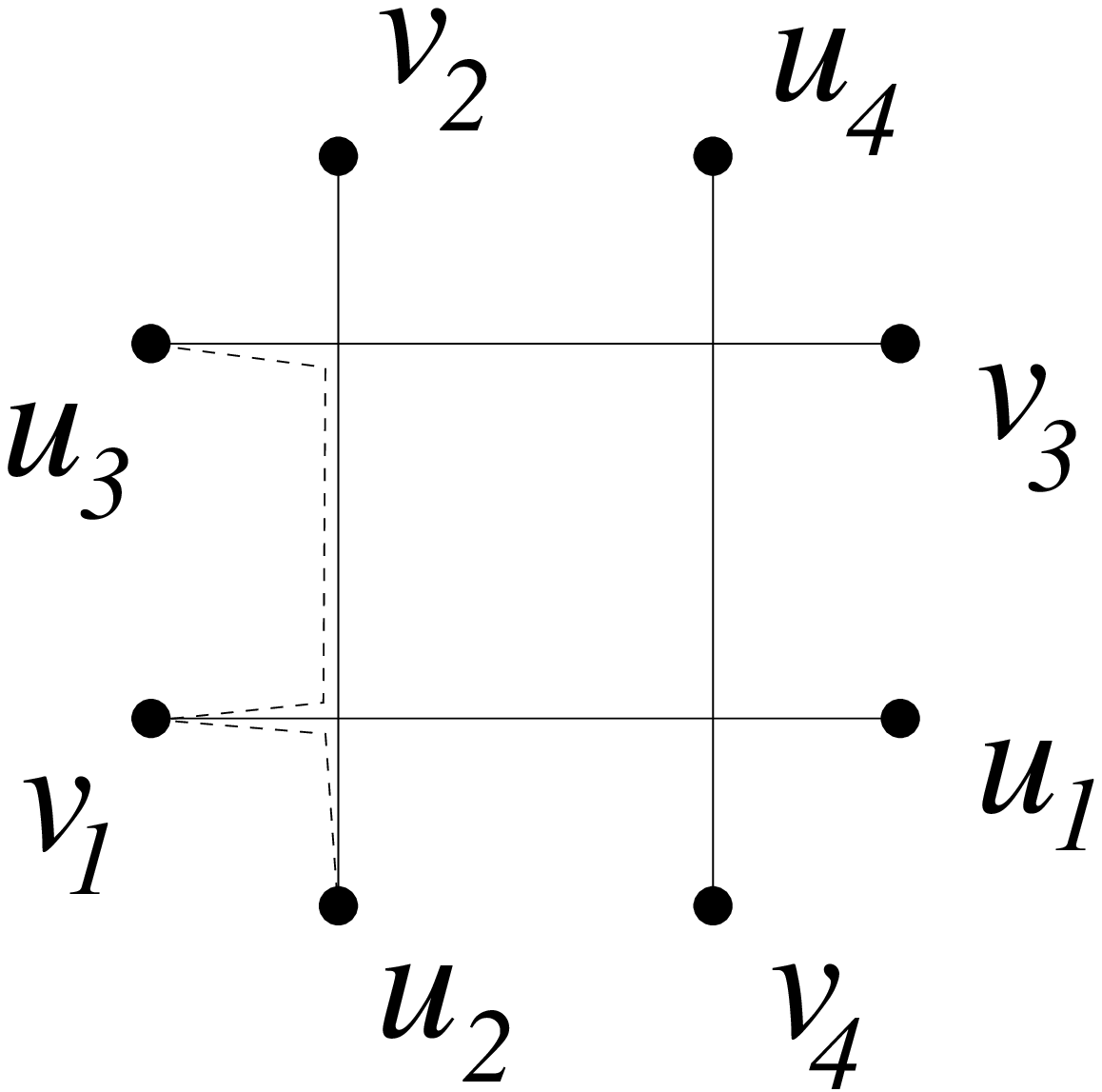}\hspace{3cm}\includegraphics{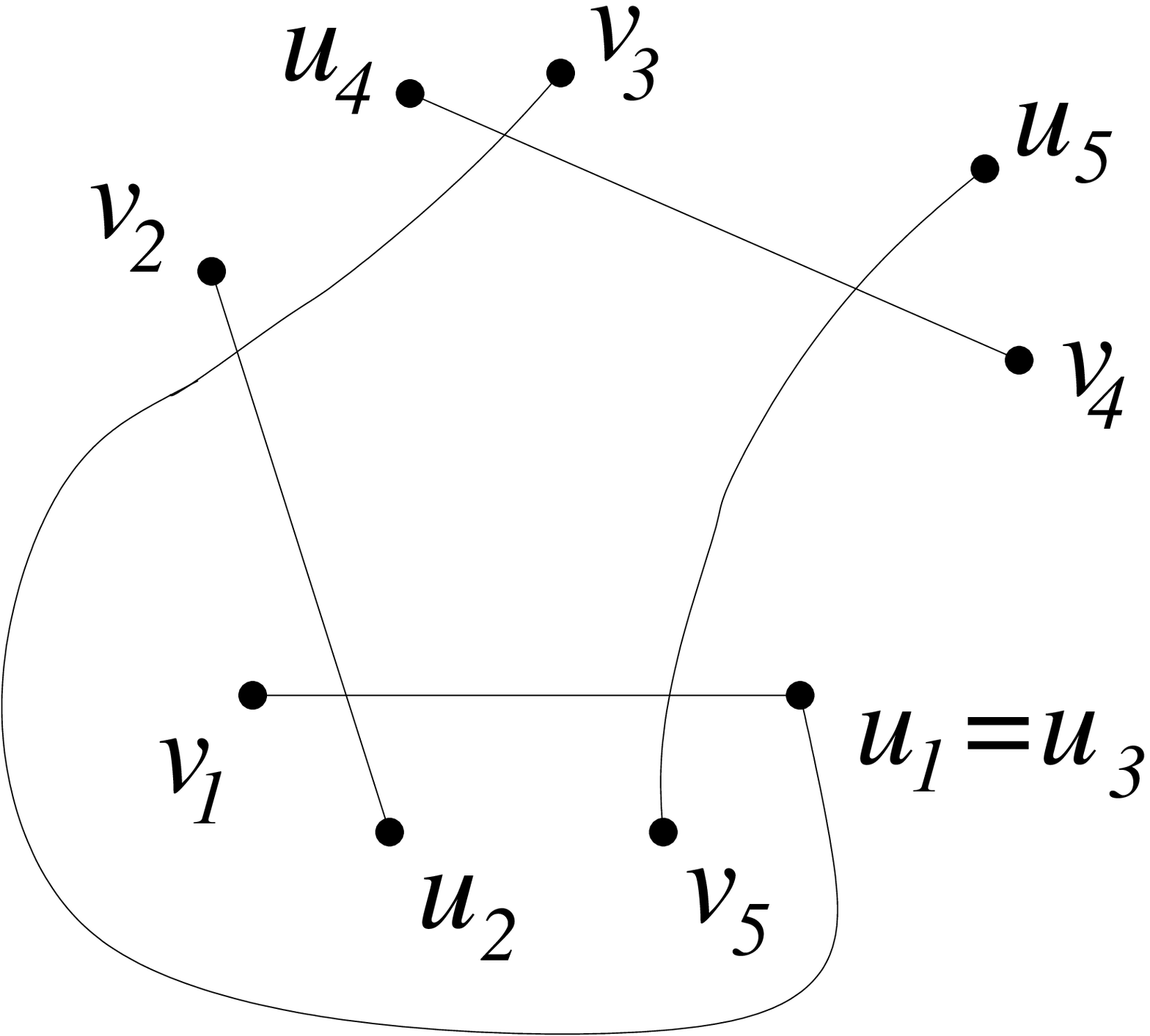} }
\caption{Case 1, $d(v_1)\ge 4$, \hfil Case 2, $d(v_1)\ge 3$ and Case 2, $u_1=u_3$.}\label{specialdisch}
\end{center} 
\end{figure}



Case 2: $v_i\neq u_{i+2}$.
The vertex $v_i$ can be connected to $u_{i+1}$ as before, and to
$u_{i+2}$ along 
$v_ip_i$, $p_ip_{i+1}$ and $p_{i+1}u_{i+2}$.
Since the drawing was saturated, $v_i$ is already adjacent to $u_i$, $u_{i+1}$, $u_{i+2}$.
Unless $u_i=u_{i+2}$, vertex $v_i$ has degree at least 3.
Note that $u_{i+1}\neq u_i$ and $u_{i+1}\neq u_{i+2}$,
since the drawing was $1$-simple.

We can argue analogously for $u_i$.
We conclude that $v_i$ has degree $2$ only if $u_i=u_{i+2}$, and
$u_i$ has degree $2$ only if $v_{i}=v_{i-2}$.

Recall that $m$ is the number of bounding edges of the special cell $C$.
For $m=3$, it is impossible that $u_i=u_{i+2}$ or  $v_{i}=v_{i-2}$, 
therefore, 
for $i=1, 2, 3$ all six vertices $u_i$, $v_i$ have degree at least $3$.

Let $m>3$, and suppose $v_1$ has degree $2$, consequently 
$u_1=u_3$. In this case, we prove that
$u_m$, $u_1$, $u_2$, $u_3$, $v_m$, $v_2$
all have degree at least $3$. 

We show it for $u_2$, the argument is the same for the other vertices.
Let $\gamma$ be the closed curve formed by the segments $u_1p_1$, $p_1p_2$ and $p_2u_3$.
(We have $u_1=u_3$.)
Suppose $d(u_2)=2$. By the previous observations, $v_m=v_2$.
However, $v_m$ and $v_2$ lie on different sides of $\gamma$, therefore they
cannot coincide.
Therefore, there are always at least six vertices $u_i$, $v_i$, with multiplicity,
which
have degree at least $3$, so the isolated vertex $x$ gets weight at least $1$.
This concludes the proof.
\end{proof}

We recall that $s^3_2(n)$ denotes the minimum number
of edges of a saturated $n$-vertex 3-simple 2-plane drawing.


\medskip

\begin{proof}[Proof of {Theorem~\ref{2-3-simple}}]

We start with the upper bounds.
Let $$f(n)=
\left\{ \begin{array}{rl}
3 & \mbox{if} \ \ n=3 \\
\lfloor 3n/4\rfloor & \mbox{otherwise} 
\end{array}\right.
$$


\begin{figure}\label{propel}
\begin{center}
\scalebox{0.25}{\includegraphics{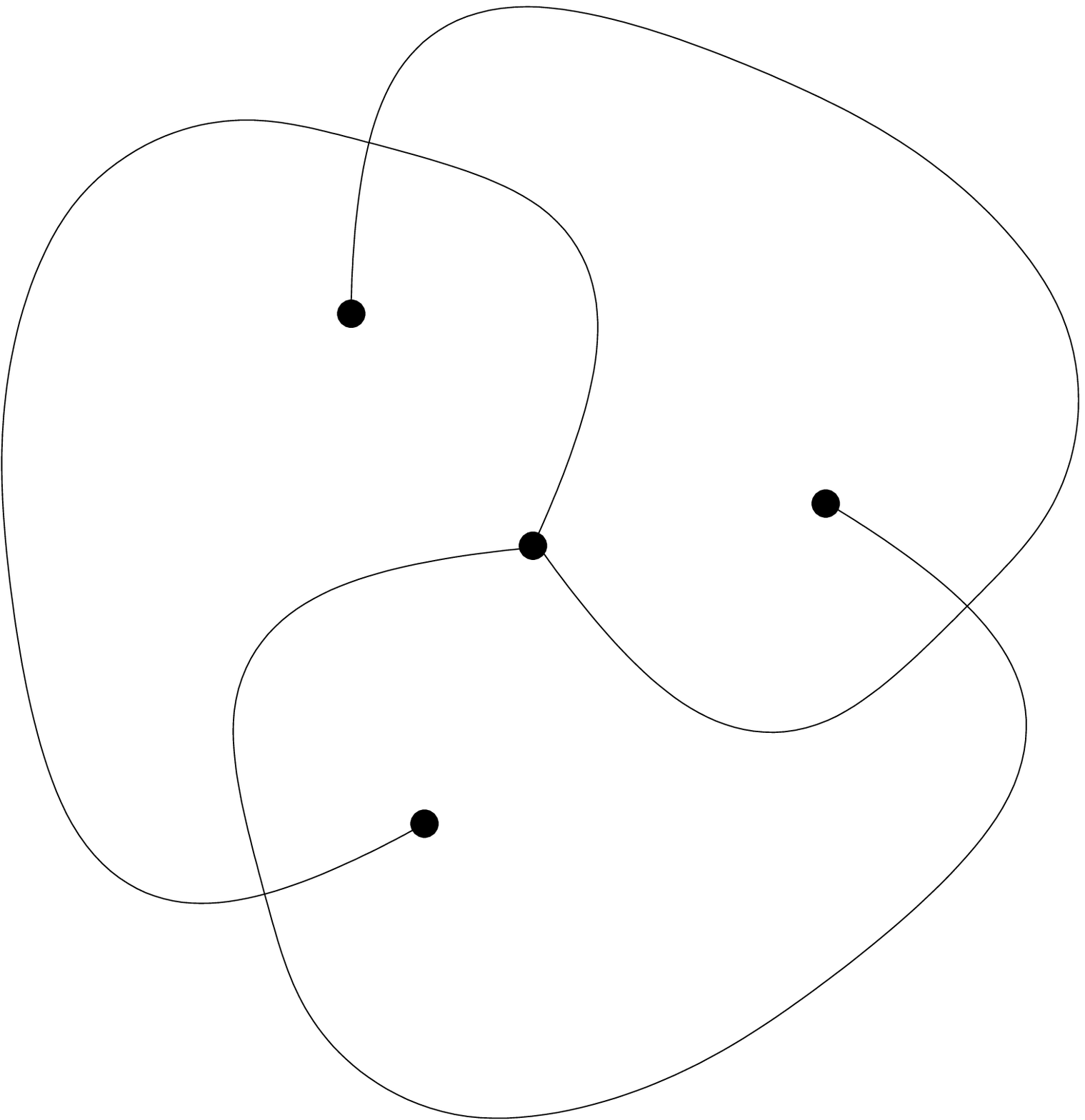}\hspace{3cm} \includegraphics{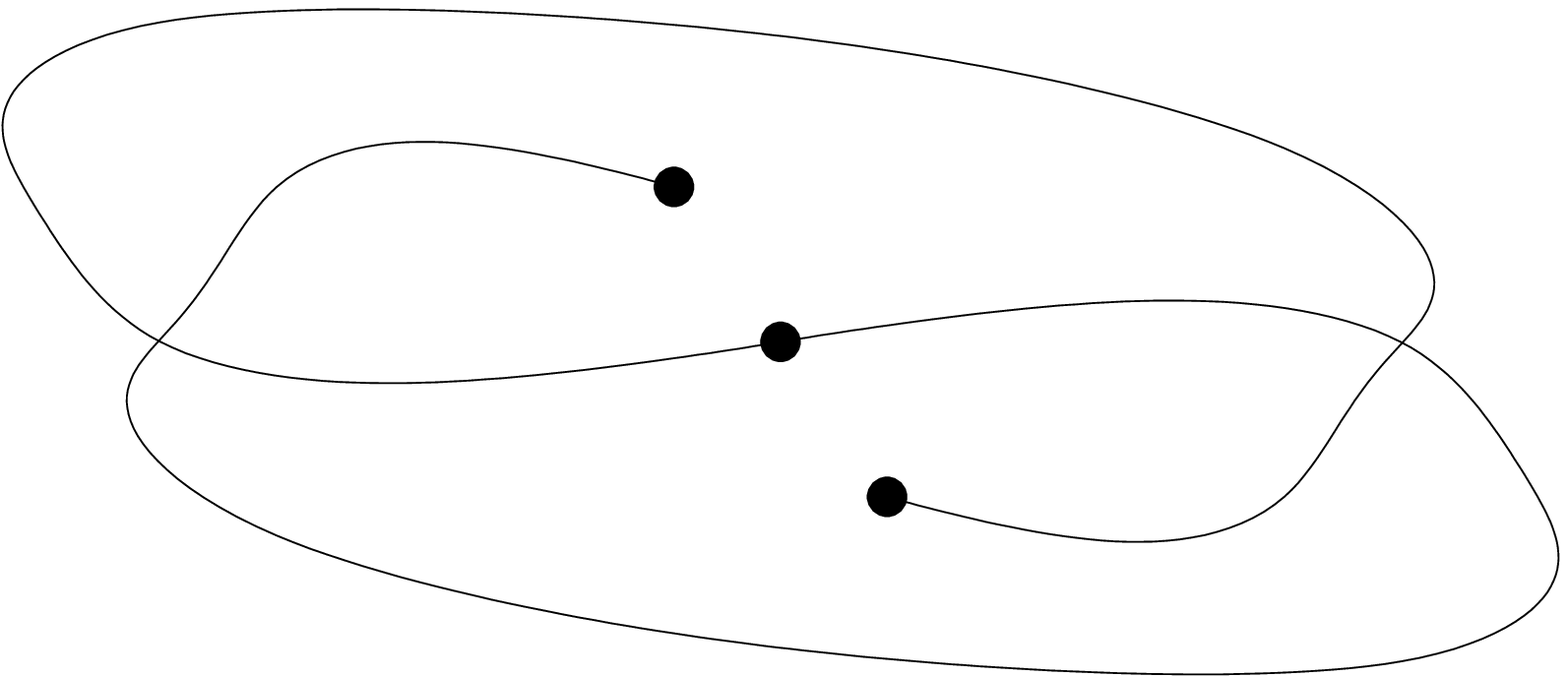}}
\caption{A $3$-propeller and a $2$-propeller.}
\end{center} 
\end{figure}


First we construct a saturated $2$-plane, $2$-simple topological graph
with $n$ vertices and $f(n)$ edges, for every~$n$.
Let $k\ge 3$. A $k$-{\em propeller} is isomorphic to a star with $k$ edges as an abstract graph,
drawn as in  Figure~\ref{propel}. 
Clearly it is a  saturated $2$-plane, $2$-simple topological graph  with $k+1$ vertices,
$k$ edges and the unbounded cell is special.

For $n=1, 2, 3$, a complete graph of $n$ vertices satisfies the statement.
For $n\ge 4$, $n\equiv 0 \bmod 4$, consider $n/4$ disjoint $3$-propellers such that
each of them is in the unbounded cell of the others.
For $n\ge 4$, $n\equiv 1, 2, 3 \bmod 4$, 
replace one of the propellers by an isolated vertex, a $K_2$, and  a $4$-propeller, respectively.
This implies the upper bound in (i), that is,  $s_2^2(n)\le f(n)$.

Now we construct a saturated $2$-plane, $3$-simple topological graph
with $n$ vertices and $\lfloor 2n/3\rfloor$ edges, for every $n$.
A $2$-{\em propeller} is isomorphic to a path of $2$ edges as an abstract graph,
drawn as in  Figure~\ref{propel}. 
Clearly it is a  saturated $2$-plane, $3$-simple topological graph  with $3$ vertices,
$2$ edges and the unbounded cell is special.

For $n\equiv 0 \bmod 3$, take $n/3$ disjoint $2$-propellers such that
each of them is in the unbounded cell of the others.
For $n\equiv 1, 2 \bmod 3$, add an isolated vertex or an independent edge.
This implies the upper bound in (ii), $s_2^3(n)\le \lfloor 2n/3\rfloor$.

\smallskip


We prove by induction on $n$ that  $s_2^2(n)\ge \lfloor2n/3\rfloor$ and
 $s_2^3(n)\ge \lfloor2n/3\rfloor$.
It is trivial for $n\le 4$. Let $n>4$ and assume that 
$s_2^2(m), s_2^3(m)\ge \lfloor2m/3\rfloor$ for every $m<n$.
Let $G$ be a saturated $2$-plane,
$2$-simple or 
$3$-simple drawing with $n$ vertices and $e$ edges.
We may assume again that every special cell contains an isolated vertex.

Suppose that $(u,uv)$ is an empty flag. 
We remove $u$ from $G$. 
Analogous to the proof of Theorem~\ref{n-1},
the obtained topological graph is saturated, it has
$n-1$ vertices and $e-1$ edges. 
By the induction hypothesis, $e-1\ge \lfloor2(n-1)/3\rfloor$,
which implies that $e\ge\lfloor2n/3\rfloor$.
Therefore, we assume for the rest of the proof that
$G$ does not contain empty flags.

\begin{claim} \label{deg3}
If $(u, uv)$ is a flag,
then either $d(v)\ge 3$ or $u$ and $v$ are included in a $2$-propeller.
\end{claim}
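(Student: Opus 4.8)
The plan is to start from the flag edge $uv$, locate its crossings, and reroute along crossing-free arcs so that saturation forces new adjacencies with the leaf $u$; since the only neighbour of $u$ is $v$, every such forced adjacency must be with $v$, and this pins down the local picture. The basic tool is the rerouting already used in Claim~\ref{emptyflag}: if we travel from a vertex $a$ along a crossing-free arc of one edge to a crossing $c$, turn there onto the crossed edge, and continue along a crossing-free arc to one of its endpoints $b$ (and, more generally, make several such turns at consecutive crossings, each time along a crossing-free arc), then the resulting curve $ab$ can be drawn with no crossing by turning inside the appropriate quadrant at each $c$. Consequently, if $G$ is saturated, $a$ and $b$ must already be adjacent.

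Since $G$ has no empty flags, $uv$ carries a crossing; let $p$ be the one closest to $u$, lying on an edge $e$. As the drawing is $2$-plane, $e$ has a crossing-free arc from $p$ to one of its endpoints $x$, and the arc $up$ is crossing-free, so the tool gives $u\sim x$ and hence $x=v$. Thus $e=vy$ is incident to $v$ and crosses $uv$ at $p$; in particular $d(v)\ge 2$ (here $y\neq u$, as no edge crosses itself). If $d(v)\ge 3$ we are in the first alternative, so assume $d(v)=2$, i.e. $N(v)=\{u,y\}$.

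Next I would show that $uv$ and $vy$ cross a second time. Following $up$ and then $vy$ from $p$ towards $y$, the tool shows that the arc of $vy$ between $p$ and $y$ cannot be crossing-free, for otherwise we could add the edge $uy$, contradicting $u\not\sim y$. Hence $vy$ has a second crossing $q$ strictly between $p$ and $y$, and by $2$-planarity $p,q$ are all of its crossings. Let $f$ be the edge meeting $vy$ at $q$, and let $a$ be an endpoint of $f$ reachable from $q$ along a crossing-free arc. Rerouting $u\to p\to q\to a$ (hugging $up$, then $vy$ between $p$ and $q$, then $f$ to $a$, turning at $p$ and at $q$) yields a crossing-free curve, so $u\sim a$ and $a=v$; therefore $f$ is incident to $v$, and since $f\neq vy$ and $d(v)=2$ we get $f=uv$. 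Thus $uv$ and $vy$ cross at both $p$ and $q$, each using up its two crossings.

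Finally, $uv$ and $vy$ share the vertex $v$ and cross twice, so they have three points in common. In a $2$-simple drawing this is impossible, which forces $d(v)\ge 3$ and establishes the claim in that setting; in a $3$-simple drawing it is precisely the drawing of a $2$-propeller, with tips $u,y$, centre $v$, and the bigon bounded by the two middle segments between $p$ and $q$ as its special cell (no other edge can enter this bigon, since the bounding middle segments have no crossings to spare). Hence $u$ and $v$ are included in a $2$-propeller. I expect the main obstacle to be the honest bookkeeping in the rerouting steps: at each turn one must check that the new curve closes off without self-crossings, without pushing $uv$, $vy$ or $f$ beyond two crossings, and without creating more than $l$ common points with any edge; and one must keep the $2$-simple and $3$-simple conclusions cleanly separated, since the $2$-propeller itself is a genuinely $3$-simple object.
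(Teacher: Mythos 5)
Your first steps follow the paper exactly: locating $p$, forcing the crossing edge to be $vy$ (so $d(v)\ge 2$), and forcing a second crossing $q$ on the arc $py$ of $vy$ are all correct and match the paper's argument. But the next step has a genuine gap. You claim that the rerouting $u\to p\to q\to a$ "yields a crossing-free curve, so $u\sim a$." This is only true when the crossing-free arc $qa$ of $f$ leaves $q$ on the \emph{same} side of $vy$ as the curve approaching $q$ (the side determined by how $uv$ crosses $vy$ at $p$). If $qa$ leaves on the \emph{opposite} side, your curve, after hugging the arc $pq$ of $vy$, cannot turn onto $qa$: to reach that quadrant at $q$ it would have to cross $vy$, which already carries its two allowed crossings $p$ and $q$, so no legal drawing of the edge $ua$ exists along this route (crossing $f$ instead does not help either -- it ruins the crossing-free claim, and $f$ may already have two crossings). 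In that case saturation forces nothing, $a$ need not equal $v$, and your conclusion "$f$ is incident to $v$, hence $f=uv$" does not follow. You even flag this bookkeeping worry at the end of your write-up, but it is not a technicality: it is exactly the point where the argument breaks.

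The paper's proof splits on precisely this side condition. Your argument reproduces its "same side" case (where indeed $a=v$ and one gets $d(v)\ge3$ or the $2$-propeller $uv,vy$ crossing at $p$ and $q$). The "opposite side" case is where most of the paper's work lies: assuming $a\neq v$, it shows the arc $pv$ of $uv$ must carry a crossing $r$ with some edge $cd$ (otherwise $u$ could be connected to $a$ by a longer rerouting), and then connects $u$ to $c$ along $up$, the arc $pv$ of $vy$, the arc $vr$ of $uv$, and $rc$ -- a route that passes \emph{around} the vertex $v$, which is only possible because $d(v)=2$ leaves the angular sector at $v$ free. A further side analysis on how $cd$ crosses $uv$ then yields either $d(v)\ge 3$, or (only in the $3$-simple setting) $cd=vy$ and again a $2$-propeller. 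Without some substitute for this entire case, your proof establishes the claim only under an unjustified assumption on the position of $a$.
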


\begin{proof}
Since $G$ does not contain empty flags,
there is a crossing on $uv$.
Let $p$ be the crossing on $uv$ closest to $u$, with edge $xy$.
There is no crossing on $xy$ between $p$ and one of the
endpoints, $x$ say, and $x\neq u$ by the assumptions. 
We can connect $u$ to $x$ along the segments $up$ and $px$.
Since the drawing was saturated, $u$ and $x$ are adjacent in $G$.
Since $u$ has degree 1, $x=v$. 
This implies $d(v)\ge 2$. We exclude parallel edges, so $y\neq u$.

Suppose $d(v)=2$.
There is a crossing on the segment $py$ of $vy$, otherwise we could connect
$u$ to $y$ along the segments $up$ and $py$ contradicting the degree assumption on $u$. 
Let $q$ be the crossing of $vy$ and $ab$.
There is no crossing on $ab$ between $q$ and one of the
endpoints, $a$ say.
If $a$ and $u$ are on the same side of edge $vy$ (that is, the directed
edges $\overrightarrow{ab}$ and $\overrightarrow{uv}$ cross the directed edge $\overrightarrow{vy}$ from the same side),
then we can connect $u$ to $a$ along the segments $up$, $pq$, $qa$.
Therefore $a=v$, so either $d(v)\ge 3$, or $b=u$, and edges $uv$ and $vy$ form a $2$-propeller. Note that this case is possible only if $G$ is $3$-simple.

So we may assume that $a$ is on the other side.
If $a=v$, then $d(v)\ge 3$, so we also assume that $a\neq v$. 
Consider now the edge $uv$. 
If there was no crossing on the segment $pv$ of $uv$, then we can connect $u$ to $a$ along $up$,
the segment $pv$ of $yv$, the segment $vp$ of $uv$, $pq$, and $qa$.
Therefore, there is a crossing on the segment $pv$ of $uv$.
Let $r$ be this crossing of $uv$ with edge $cd$,
and we can assume there is no crossing on the segment $cr$. (Here, $c$ or $d$ might coincide with $a$.)
If $c$ and $y$ are on the same side of $uv$ (that is, the directed
edges $\overrightarrow{vy}$ and $\overrightarrow{dc}$ cross the directed edge $\overrightarrow{vu}$ from the same side),
then we can connect $u$ to $c$ along $up$, $px$, $xr$, $rc$, which means that $c=v$, so $d(v)\ge 3$.
If $c$ and $y$ are on opposite sides of $uv$, then 
we can connect $c$ to $v$, so they are already connected.
Therefore, $c=y$. 
However, we assumed that
$\overrightarrow{vy}$ and $\overrightarrow{dc}$ cross the directed edge $\overrightarrow{vu}$ from the opposite sides, so there is another
crossing of $uv$ and $vy$. If $G$ is $2$-simple, this is impossible and we are done. If $G$ is $3$-simple, then this crossing can only be $r$, so $c=y$ and $d=x$. 
Now the edges $uv$ and $vy$ form a $2$-propeller. 
\end{proof}


In a graph $G$, a connected component with at least two vertices is an {\em essential component}. 
If $G$ has only one essential component, then $G$ is {\em essentially connected}.

\begin{claim} \label{ess}
We can assume without loss of generality that $G$ is essentially connected.
\end{claim}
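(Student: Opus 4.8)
The plan is to prove the reduction in contrapositive spirit: I will assume that $G$ has at least two essential components and show that the target bound $e\ge\lfloor 2n/3\rfloor$ already follows from the induction hypothesis, so that the remaining (genuinely hard) analysis may be carried out under the extra hypothesis that $G$ is essentially connected. The whole argument rests on two things: first establishing that distinct essential components cannot interact, and then splitting $G$ into strictly smaller saturated drawings to which induction applies.

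The structural engine I would prove first is that \emph{no edge of one essential component crosses an edge of another}. Suppose $e=uv$ and $f=ab$ lie in different components and cross at a point $p$. Since the drawing is $2$-plane, each of $e,f$ has a crossing-free half from $p$ to one of its endpoints, say $pu$ on $e$ and $pa$ on $f$. Around $p$ the four half-edges alternate between $e$ and $f$, so the half toward $u$ and the half toward $a$ are consecutive in the rotation at $p$; hence the arc $up$ followed by $pa$ can be rounded at the corner $p$ to give a crossing-free curve from $u$ to $a$. This curve is an addable edge joining two distinct components, contradicting saturation (and $u,a$ cannot already be adjacent, being in different components). Consequently the components are pairwise non-crossing, so each connected component lies inside a single face of every other, and they form a laminar (nested) family in the plane.

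Next I would set up the decomposition. By the structure of special cells (their corners are crossings of consecutive boundary edges), together with the non-crossing property, each connected piece of the boundary of a special cell belongs to a single essential component, and I assign every isolated vertex to the component carrying the boundary of its special cell. For each essential component $C_i$ I let $G_i$ consist of $C_i$ together with its assigned isolated vertices, drawn by deleting all other components. Because distinct components never cross, deleting the others removes no crossing from the edges of $C_i$; moreover, any curve that would be addable to $G_i$ could be rerouted within its containing face to skirt the (confined) deleted components without incurring new crossings, so it would already have been addable in $G$. Hence each $G_i$ is itself a saturated $l$-simple $2$-plane drawing, with $\sum_i n_i=n$ and $\sum_i e_i=e$ (writing $n_i=|V(G_i)|$, $e_i=|E(G_i)|$), and $n_i<n$ since there are at least two components. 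Induction then gives $e_i\ge\lfloor 2n_i/3\rfloor$ for every $i$.

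The hard part will be the final bookkeeping, because $n\mapsto\lfloor 2n/3\rfloor$ is not superadditive: two summands with $n_i\equiv 1\pmod 3$ each lose a full unit, so naively $\sum_i\lfloor 2n_i/3\rfloor$ can fall one short of $\lfloor 2n/3\rfloor$. The feature that should rescue the count is the saturation constraint that each special cell contains at most one isolated vertex (two isolated vertices in a common cell could be joined). A residue $n_i\equiv 1\pmod 3$ corresponds to a ``surplus'' isolated vertex carried by $C_i$ beyond what the propeller-type skeleton forced by Claim~\ref{deg3} supplies, and I would argue that two components cannot simultaneously carry such a surplus: non-crossing forces their outer special cells to merge into a single special cell (with disconnected boundary), which by saturation holds only one isolated vertex. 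Ruling out the pattern of two summands $\equiv 1\pmod 3$ restores superadditivity across the decomposition, giving $e=\sum_i e_i\ge\sum_i\lfloor 2n_i/3\rfloor\ge\lfloor 2n/3\rfloor$ and completing the reduction. I expect this arithmetic-meets-topology step — matching exactly the failure cases of floor-superadditivity against the one-isolated-vertex-per-cell rule — to be the principal obstacle.
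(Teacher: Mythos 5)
Your structural groundwork is essentially sound, and it makes explicit some facts the paper only uses implicitly: in a saturated drawing, edges of distinct essential components cannot cross (the corner-cutting argument at a crossing $p$ is correct), so the components nest laminarly, and with some care one can show that each component together with suitably assigned isolated vertices is again a saturated drawing. The fatal problem is the bookkeeping, which you yourself flag as the principal obstacle, and your proposed repair does not work. Because your decomposition conserves vertices exactly ($\sum_i n_i = n$), you need superadditivity of $n \mapsto \lfloor 2n/3 \rfloor$, which genuinely fails when two parts have $n_i \equiv 1 \pmod 3$: for instance $\lfloor 8/3\rfloor + \lfloor 8/3\rfloor = 4 < 5 = \lfloor 16/3\rfloor$. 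Your rescue rests on the claim that a part with $n_i \equiv 1 \pmod 3$ that is tight for the bound must carry a ``surplus'' isolated vertex in its outer special cell, so that two such parts cannot coexist. That claim is never proved and cannot be extracted from the induction hypothesis: induction only supplies $e_i \ge \lfloor 2n_i/3\rfloor$ and says nothing about \emph{which} smaller saturated drawings attain equality. A part can perfectly well have $n_i \equiv 1 \pmod 3$ with no isolated vertices at all, and then your topological argument about merging outer special cells has nothing to grip. To carry out your plan you would have to strengthen the induction hypothesis (essentially, characterize the extremal configurations), which is a substantially different and harder undertaking.

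The paper sidesteps all of this with a different accounting. It splits $G$ into just two parts: a component $G_1$ that is minimal with respect to the nesting order, and the union $G_2$ of all remaining essential components; it then forms $H_i$ by placing an isolated vertex into \emph{every} special cell of $G_i$. Along the cell $C$ of $G$ where the two parts face each other, at least one of the corresponding cells $C_1$, $C_2$ is special (otherwise the two parts could be joined, contradicting saturation), so an isolated vertex is counted there in both parts, or created where $G$ had none; either way $n_1 + n_2 = n + 1$ while $e_1 + e_2 = e$. That ``$+1$'' is exactly what repairs the floor arithmetic: the inequality $\lfloor 2n_1/3\rfloor + \lfloor 2n_2/3\rfloor \ge \lfloor 2(n_1+n_2-1)/3\rfloor$ holds for \emph{all} $n_1, n_2 \ge 2$, with no case analysis of extremal structure whatsoever. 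If you want to salvage your approach, the fix is not to rule out bad residue pairs, but to change the assignment so that a special cell bordered by several parts contributes an isolated vertex to \emph{each} of them (equivalently, recurse on a two-part split as the paper does), after which your own inequality chain goes through.
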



\begin{proof}

Suppose to the contrary $G$ has at least two essential components.
We define a partial order on the essential components of $G$:
$G_i\prec G_j$ if and only if $G_i$
lies in a bounded cell of $G_j$.
Let $G_1$ be a minimal element with respect to $\prec$ and let $G_2$
be the union of all other essential components.
There is a cell $C$ of $G$, which is bounded by both $G_1$ and $G_2$. Let $C$ correspond
to cell $C_1$ of $G_1$ and cell $C_2$ of $G_2$.
By the definition of $G_1$, $C_1$ is the unbounded cell of $G_1$.
Since $G$ is saturated, at least one of $C_1$ or $C_2$ is a special cell, otherwise $G_1$ and $G_2$ can be connected.

For $i=1, 2$, let $H_i$  be the topological graph $G_i$ together with an isolated vertex in every special cell.
Let $n_i$ denote the number of vertices and $e_i$ the number of edges in $H_i$.
We notice $e=e_1+e_2$ and $n=n_1+n_2-1$ if exactly one of $C_1$ and $C_2$ is a special cell.
Also $n=n_1+n_2-1$ if both of them are special cells, since we can add 1 isolated vertex instead of 2.
By the induction hypothesis, we have $e_i\ge \lfloor2n_i/3\rfloor$,
so $e\ge\lfloor2n_1/3\rfloor+\lfloor2n_2/3\rfloor$,
and it is easy to check, that
for any $n_1, n_2\ge 2$,
$\lfloor2n_1/3\rfloor+\lfloor2n_2/3\rfloor\ge \lfloor2(n_1+n_2-1)/3\rfloor$.
Therefore, 
$e\ge\lfloor2n_1/3\rfloor+\lfloor2n_2/3\rfloor\ge \lfloor2(n_1+n_2-1)/3\rfloor= \lfloor2n/3\rfloor$.
So, if $G$ is not essentially connected, then we reduce the problem and proceed by induction.
\end{proof}

Assume the $3$-simple $2$-plane drawing $G$ has a flag $(u, uv)$.
If $d(v)=1$, then $G$ is isomorphic to $K_2$ and the theorem holds.
If $d(v)=2$, then $G$ contains a $2$-propeller $u,v,w$ by Claim \ref{deg3}. 
Since $G$ is essentially connected, but there is an isolated vertex in every special cell, there is
an isolated vertex $x$ in the special cell of the $2$-propeller.
Therefore, if $d(v)=2$ and $d(w)=1$, then $G$ is isomorphic to a $2$-propeller plus an isolated vertex and we are done. 
If $d(v)=2$ and $d(w)>1$, then remove vertices $u, v, x$. We removed 3 vertices and 2 edges,
so we can use induction.


In the rest of the proof,
we assume that every leaf of $G$ is adjacent to a vertex of degree at least $3$, 
and there is no $2$-propeller subgraph in $G$.
We give weight $3/2$ to every edge.
We discharge the weights to the vertices
and show that either every vertex gets weight at least 1,
or we can prove the lower bound on the number of edges by induction.

Let $uv$ be an edge.
Vertex $u$ gets $1/d(u)$ weight and $v$ gets $1/d(v)$ weight from $uv$. 
Every edge has a non-negative remaining charge.

If $uv$ is a special edge, then it is easy to verify
that $uv$ bounds only one special cell, and the special cell contains an isolated vertex by the assumption, just like in the proof of Claim~\ref{only1}.
In this case, edge $uv$ 
gives the remaining charge to this isolated vertex.
After the discharging step, any vertex $x$ with $d(x)>0$ gets charge at least 1.

Now let $x$ be an isolated vertex, its special cell being $C$. 
We distinguish several cases. 

\smallskip

Case 1: The special cell $C$ has two sides.
Let $u_1v_1$ and $u_2v_2$ be the bounding edges.
They cross twice, in $p$ and $q$ say,
so there are no further crossings on $u_1v_1$ and $u_2v_2$.
The four endpoints are either distinct, 
or two of them $u_1$ and $u_2$ might coincide, if $G$ was $3$-simple.
Suppose the order of crossings on the edges is $u_ipqv_i$, for $i=1,2$.
If the vertices $u_1$ and $u_2$ are distinct, then they can be connected along $u_1p$ and $pu_2$. Therefore, $u_1$ and $u_2$ are either adjacent or coincide in $G$.
Similarly, $v_1$ and $v_2$ are also adjacent.
Therefore, all four endpoints have degree at least $2$, and both $u_1v_1$ and $u_2v_2$ give
at most charge $1/2$ to its endpoints.
Their remaining charges are at least $1/2$, so $x$ gets at least charge $1$.

\smallskip

For the rest of the proof, suppose
$C$ is bounded by
$e_1, e_2, \ldots, e_m$ in clockwise direction,
$e_i=u_iv_i$ such that $\overrightarrow{u_iv_i}$ has $C$ on its right. 

Case 2: $m=3$.
If none of the bounding edges is a flag,
then we are done since each
of those edges give weight at least $1/2$ to $x$.
Suppose that $u_1$ is a leaf.
We can connect $u_1$ to $v_2$ along segments of the edges $u_1v_1$ and $u_2v_2$.
Since $u_1$ is a leaf and the drawing was saturated, $u_1$ and $v_2$ are adjacent, consequently
$v_1=v_2$. Similarly,
we can connect  $u_1$ to $v_3$, so $v_1=v_2=v_3$.

If $u_2$ is not a leaf, then $u_1v_1$ and $u_3v_3$ both give at least $1/6$
to $x$, and $u_2v_2$ gives at least $2/3$, so we have charge at least $1$ for $x$.
The same applies if $u_3$ is not a leaf.
So assume $u_1$, $u_2$ and $u_3$ are all leaves. 
If there are no other edges in $G$, then we can see from the crossing
pattern that
$G$ is a $3$-propeller and an isolated vertex. 
That is, $n=5$ and $e=3$ and the required inequality holds.

Suppose there are further edges. By Claim~\ref{ess}, $G$ is essentially connected. Since
$u_1$, $u_2$, $u_3$ are leaves, $v_1$ is a cut vertex.
Let
$H_1=G\setminus \{x, u_1, u_2, u_3\}$. 
The induced subgraph $H_1$ has $n-4$ vertices and $e-3$ edges, and it is saturated.
Therefore, by the induction hypothesis, $e-3\ge f(n-4)$. Notice that $f(n)\le f(n-4)+3$,
consequently $e\ge f(n)$.

\smallskip

Case 3: $m>3$. 
Each edge gives at least $1/6$ charge to $x$ by Claim~\ref{deg3}. 
If an edge is not a flag, then it gives at least $1/2$ charge to $x$.
If there is at least one non-flag bounding edge, we are done.
Suppose that each edge $u_iv_i$ is a flag
(that is, $d(u_i)$ or $d(v_i)$ is $1$).
We may also assume that $u_1$ is a leaf.
Now, as in the previous case, we can argue that $v_3=v_2=v_1$. 
It implies $u_2$ and $u_3$ are leaves, and by the same argument, $v_5=v_4=v_3=v_2=v_1$.
We can continue and finally we obtain that
all $v_i$ are identical and all $u_i$ are leaves. So the vertices
$u_i, v_i$ $1\le i\le m$ form a star, and they have the same crossing pattern as an $m$-propeller. 
Therefore, $u_i, v_i$ $1\le i\le m$ span an $m$-propeller.
We can finish this case exactly as Case 2. 
If there are no further edges in $G$, then the graph is an $m$-propeller and an isolated vertex. 
That is, $n=m+2$ and $e=m$ and the inequality holds.
If there are further edges,
then $v_1$ is a cut vertex, and we can apply induction. 
This concludes the proof of Theorem~\ref{2-3-simple}.
\end{proof}


\bigskip

\noindent {\bf Remarks.}

\begin{itemize}
  
\item  We have established lower and upper bounds on the number of edges of a saturated, $k$-simple, 2-plane
drawing of a graph. As we mentioned in the introduction, this problem has many modifications, generalizations. Probably the most natural modification is that instead of graphs already drawn, we consider
saturated {\em abstract} graphs.  A graph $G$ is saturated $l$-simple $k$-planar, if 
it has an $l$-simple $k$-planar drawing but adding any edge, 
the resulting graph does not have such a drawing.
Let $t^l_k(n)$ be the minimum number of edges of a saturated $l$-simple $k$-planar graph of $n$ vertices.
By definition, $s^l_k(n)\le t^l_k(n)$. 
We are not aware of any case when the best lower bound on 
 $t^l_k(n)$ is better than for  $s^l_k(n)$.
On the other hand, 
it seems to be much harder to  establish an upper bound construction for 
$t^l_k(n)$ than for  $s^l_k(n)$. In fact, we know nontrivial upper bounds
only in two cases, $t_1^1(n)\le 2.64n+O(1)$
\cite{brandenburg}
and $t_2^1(n)\le 2.63n+O(1)$
\cite{auer}, the latter without a full proof. 

It is known that a $k$-planar graph has at most $c\sqrt{k}n$ edges \cite{pt}, 
so
$t^l_k(n)\le c\sqrt{k}n$, for some $c>0$.

\medskip 

\noindent {\bf Problem 1.} Prove that for every $c>0$,  
$t^l_k(n)\le c\sqrt{k}n$ if $k, l, n$ are large enough.

\medskip

\item For any $n$ and $k$, the best known upper and lower bounds on 
$s_k^l$ decrease or stay the same as we increase $l$.
This would suggest that $s_k^l\le s_k^{l-1}$ for any $n, k, l$, or at least 
if $n$ is large enough, however, we cannot prove it.

\medskip 

\noindent {\bf Problem 2.} 
Is it true, that for any $k$ and $l$, and $n$ large enough,
$s_k^l\le s_k^{l-1}$?

\end{itemize}



\end{document}